\newcommand{\ind}{{\mathrm{ind}}}
\newcommand{\GL}{{\mathrm{GL}}}
\newcommand{\PGL}{{\mathrm{PGL}}}
\newcommand{\SO}{{\mathrm{SO}}}
\newcommand{\St}{{\mathrm{St}}}
\newtheorem{lemma}{Lemma}
\newtheorem{prop}[lemma]{Proposition}
\newtheorem{theorem}[lemma]{Theorem}
\newtheorem{cor}[lemma]{Corollary}
\title[Bessel models]{Iwahori component of Bessel model spaces}
\author[Chan]{Kei Yuen Chan}
\address{Department of Mathematics \\University of Georgia}
\email{KeiYuen.Chan@uga.edu}
\author[Savin]{Gordan Savin}
\address{Department of Mathematics \\University of Utah}
\email{savin@math.utah.edu}
\begin{document}
\begin{abstract} 
Let $k_0$ be a $p$-adic field of odd residual characteristic, and $G$ a special orthogonal group defined as acting on a split $2n+1$-dimensional orthogonal space $V$ over $k_0$. 
 Let $H$ be the Iwahori Hecke algebra of $G$. 
 A purpose of this short article is to compute the Iwahori component of a Bessel model space and identify it with an explicit 
projective $H$-module. 
 
\end{abstract} 

\maketitle

\section{Introduction}  

The Bessel models considered in this paper have a rather long history. They were introduced by Novodvorski and Piatetski-Shapiro in the case $n=2$ \cite{NPS}, 
extended by Bump, Friedberg and Furusawa \cite{BFF} and naturally appear in theta correspondences \cite{F}. 
Last but not least, these models appear as the second in a sequence of (generalized) 
Bessel models introduced by Gan, Gross and Prasad \cite{GGP}, where the first model is the Whittaker model space, also known as the Gelfand-Graev representation. 
A more precise definition is as follows. Let $V$ be a split $2n+1$-dimensional orthogonal space over $k_0$, a $p$-adic field of odd residual characterstic. 
In particular, we can write $V= X + X^{\vee} + k_0$, where 
$X$ is a totally isotropic subspace of dimension $n$.  Let $v_1, \ldots , v_n$  be a basis of $X$. 
Let $P=LN$ be a parabolic subgroup of $G=\SO(V)$  defined as the stabilizer of the partial flag $\langle v_1 \rangle \subset \ldots \subset \langle v_1, \ldots ,v_{n-1} \rangle$. Here $N$ 
is the unipotent radial and $L= (k_0^{\times})^{n-1} \times \PGL_2(k_0)$ is the Levi factor. The Levi factor $L$ acts naturally on characters of $N$, and the stabilizer of a 
generic character is a one dimensional torus $T_k \subset \PGL_2(k_0)$, where $k$ is a quadratic separable extension of $k_0$ such that $T_k\cong k^{\times}/k^{\times}_0$. 
We assume henceforth that $k$ is a field, so $T_k$ is compact. Let $\chi$ be the character of $T_kN$ obtained by extending the generic character of $N$ trivially to $T_k$. 
The (special) Bessel model space is $\mathrm{ind}_{T_kN}^G \chi$. In fact, if $k$ is ramified, then $T_k$ is a group scheme with two connected components over $O$, 
the ring of integers in $k_0$, so one can consider two characters $\chi^{\pm}$ of $T_KN$, trivial and non-trivial on the connected component of $T_k$. In this case 
we also have a (twisted) Bessel model space $\mathrm{ind}_{T_kN}^G \chi^-$. 

\smallskip 

Let $I$ be an Iwahori subgroup of $G$ and $H$ the Hecke algebra of compactly supported, $I$-biinvariant complex functions on $G$. As an abstract algebra, $H$ corresponds 
to the Coxeter diagram of type $\widetilde {C}_n$, but with unequal parameters:

\begin{picture}(200,100)(-100,-15)

\put(53,30){\line(1,0){24}}
\put(22,32){\line(1,0){26}}
\put(22,28){\line(1,0){26}}
\put(90,30){\line(1,0){10}}
\put(113,30){\line(1,0){24}}
\put(142,32){\line(1,0){26}}
\put(142,28){\line(1,0){26}}

\put(20,30){\circle{6}}
\put(50,30){\circle{6}}
\put(80,30){\circle{6}}
\put(110,30){\circle{6}}
\put(140,30){\circle{6}}
\put(170,30){\circle{6}}

\put(17,38){$0$}
\put(47,38){$1$}
\put(167,38){$n$}

\end{picture}

\noindent 
More precisely,  $H$ is generated by elements $t_0, t_1, \ldots , t_n$, one corresponding to each vertex of the diagram, satisfying the braid relations as prescribed by the 
Coxeter diagram. These elements satisfy the quadratic relations $(t_i +1)(t_i -q)=0$ for all $i\neq 0$ and $(t_0+1)(t_0-1)=0$, where $q$ is the order of the residual field of $k_0$. 
The algebra $H$ has two finite dimensional subalgebras $H_0$ and $H_n$ obtained by removing the special vertices $0$ and $n$, respectively, of the Coxeter diagram. That is, 
$H_0$ is generated by $t_1, \ldots , t_n$ and $H_n$ is generated by $t_0, \ldots , t_{n-1}$. The algebra $H_0$ is the subalgebra of $H$ of functions supported on a hyper-special 
maximal compact subgroup of $G$ i.e. whose reduction mod $p$ is $\SO_{2n+1}$, while $H_n$ consists of functions supported on a maximal subgroup whose reduction mod $p$ is 
the disconnected group $\mathrm{O}_{2n}$.   Let $\mathrm{sgn}$ and $\mathrm{sgn}'$ be two one-dimenional characters (types) of $H_0$ defined by 
$t_i \mapsto -1$ for $i=1, \ldots, n-1$ and $t_n \mapsto -1$ and $t_n\mapsto q$, respectively. It is well known (\cite{BM} and \cite{R}) that for irreducible $G$-modules generated by $I$-fixed vectors 
the presence of the type $\mathrm{sgn}$ is equivalent to existence of a non-zero Whittaker functional. 
A similar relationship between the type $\mathrm{sgn}'$ and existence of a Bessel model was discovered Brubaker, Bump and Friedberg \cite{BBF}. 
 Following a suggestion of Sol Friedberg, as the first result of this paper, we prove that there exist isomorphisms of $H$-modules 
\[ 
(\mathrm{ind}_{T_kN}^G (\chi ))^I \cong H \otimes_{H_0} \mathrm{sgn}' \] \[  (\mathrm{ind}_{T_kN}^G (\chi^{\pm} ))^I \cong H \otimes_{H_n} \epsilon^{\pm}, 
\] 
where $k$ is an unramfied and ramified, respectively, quadratic extension of $k_0$.

\smallskip 
The second result of this paper concerns the restriction to $G=\SO_{2n+1}$ of the Steinberg representation $\St$ of the split orthogonal group $\SO_{2n+2}$. 
In fact, for the same price, 
one can consider two representations, the Steinberg representation $\St^{+}$, and its twist $\St^{-}$ by a quadratic character given as the composite of the spinor norm, and 
the unique non-trivial, unramified, quadratic character of $k_0^{\times}$. We prove that there exist isomorphisms of $H$-modules 
\[ 
(\St^{\pm})^I \cong H \otimes_{H_n} \epsilon^{\pm}. 
\] 
This result has several immediate consequences. If $\pi$ is an irreducible representation of $G$ generated by $I$-fixed vectors, then $\pi$ is a quotient of $\St$ if and only if 
it has a non-zero (special) Bessel model. Furthermore, the Iwahori component of $\St$ is projective, hence $\mathrm{Ext}_G^i(\St, \pi)=0$ for all $i>0$. 
 A similar result, projectivity of the Steinberg representation of $\GL_{n+1}$ when restricted to $\GL_n$, was obtained firstly in \cite{CS0} for the Iwahori component, 
 and then for all Bernstein components in \cite{CS3}. See the article of Prasad \cite{P} for a detailed discussion of ext-branching problems.

\section{Basic case}

Let $k_0$ be a $p$-adic field of odd residual characteristic,  $O$ the ring of integers in $k_0$, and $\varpi$ a uniformizer.
Let $G=\PGL_2 (k_0)$ and, (ab)using the $\GL_2$-terminology,  let $T$ be the torus of diagonal matrices and $B=TU$ the Borel subgroup of 
upper triangular matrices.  Let $\bar B = T \bar U$ be the Borel subgroup of lower triangular matrices. 
Then $G/B=\mathbb P^1(k_0)$ is a projective line. 
Let  $k$ be a quadratic extension of $k_0$ and $T_k=k^{\times}/k_0^{\times}$. The torus $T_k$ acts simply transitively on the projective line 
 $\mathbb P^1(k_0)\cong k^{\times}/k _0^{\times}$, hence we have exact decompositions
\[ 
G= T_kB  = T_k \bar B. 
\] 
Let $K$ be a maximal compact subgroup of integral matrices, and $I$ the Iwahori subgroup of $K$ such that $I\cap U$ has the off-diagonal entry divisible by $\varpi$. 
Then $K= I \cup Is_1 I$ where $s_1$ is a permutation matrix.  Let  $\tilde I = I \cup s_0 I$ be the normalizer of $I$ in $G$. The Iwahori Hecke algebra is generated by 
two elements $t_0$ (supported and equal to 1 on $s_0 I$ ) and $t_1$ (supported and equal to 1 on $Is_1 I$) 
satisfying relations $t_0^2=1$ and $(t_1+1)(t_1-q)=0$. Let $H_0$ and $H_1$ be the 2-dimensional subalgebras generated by 
$t_0$ and $t_1$, respectively. We have Bernstein decompositions  
\[ 
H \cong A\otimes H_1\cong A\otimes H_0 
\] 
where $A\cong \mathbb C[T/T(O)]$. This isomorphism satisfies the following properties. 
The group $T/T(O)$ is isomorphic to the co-character lattice of $T$, and for every $x \in T/T(O)$ let $\theta_x\in A$ be  
the element corresponding to $x$ under the isomorphism $A\cong \mathbb C[T/T(O)]$.  If $(\pi,V)$ is a  smooth $G$-module then we have a natural 
isomorphism $\pi^I \cong \pi_{\bar U}^{T(O)}$. This isomorphism intertwines the action $t_x$ on $\pi^I$ with the action of $x$ on $\pi_{\bar U}^{T(O)}$.

The algebra $H_0$ has 2 characters, denoted by $\epsilon^{+}(t_0)=1$ and $\epsilon^{-}(t_0)=-1$, while $H_1$ has two  
characters $\mathrm{sgn}(t_1)=-1$ and $\mathrm{sgn}'(t_1)=q$. Inducing these characters to $H$ we get four projective $H$-modules. 

\smallskip 

Assume now that $k$ is unramified.  The Bessel model space is 
\[ 
\Pi =\ind_{T_k}^{G} (1)=C_c^{\infty} (T_k\backslash G).
\] 
Since $k$ is unramified, we can assume that $T_k$ sits in $K$. 
Thus the characteristic function of $K$ is contained in $\Pi$, and $t_1$ acts on it by the character $\mathrm{sgn}'$. Hence, by the Frobenius reciprocity, 
 we have a morphism of $H$-modules 
\[ 
H\otimes_{H_1} \mathrm{sgn}' \rightarrow \Pi^I  
\] 
where $1\otimes 1$ is mapped to the characteristic function of $K$. 
We shall prove that this is an isomorphism by proving that it is so as $A$-modules. 
 From the  decomposition $G =T_k \bar B$,  it easilly follows that the space of $T(O)$-fixed vectors in the Jacquet module $\Pi_{\bar U}$ is isomorphic to $A\cong \mathbb C[T/T(O)]$. 
Furthermore, from the decomposition  $K=T_k (\bar B \cap K) $, it follows that the characteristic function of $K$ maps to 
$1\in \mathbb C[T/T(O)]$ under the Jacquet module isomorphism.  Now the claimed isomorphism follows from the Bernstein decomposition of $H$. 

As a consequence, we get that the Steinberg representation $(t_0\mapsto -1, t_1\mapsto -1)$ does not have the (unramified) Bessel model, observed by Waldspurger many years ago. 

\smallskip  

Assume now that $k$ is ramified. The image of valuation on $k$ is $\frac12\mathbb Z$, hence  $T_k$ has two unramified characters $\chi^{+}$, the trivial, and 
$\chi^-$ which takes value $-1$ on any uniformizing element in $k$. A (twisted) Bessel model space is 
\[ 
\Pi^{\pm} =\ind_{T_k}^{G} (\chi^{\pm}), 
\] 
i.e. $\Pi^+$ is the Bessel model, while $\Pi^-$ is the twisted. In this case $T_k \subset \tilde I$ and $\chi^{\pm}$ can be extended to characters of $\tilde I$, trivial on $I$. 
These functions, supported on $\tilde I$ are elements in $\Pi^{\pm}$ and we have isomorphisms of $H$-modules 
\[ 
(\Pi^{\pm})^I\cong H\otimes_{H_0} \mathrm{sgn}^{\pm}. 
\] 

\smallskip 
\noindent 
{\bf Remark:} 
Now assume that $G=D^{\times} /k_0^{\times}$ where $D$ is a quaternion algebra over $k_0$. 
 If we normalize the valuation on $D^{\times}$ so that the image is $\mathbb Z$, then 
the restriction of valuation to $k_0$ has the image $2\mathbb Z$. Thus the valuation gives a homomorphism of $G$ on $\mathbb Z/2\mathbb Z$. The kernel of this map is the 
Iwahori subgroup, hence the Iwahori Hecke algebra is the group algebra of $\mathbb Z/2\mathbb Z$. Its representations correspond to the Steinberg and twisted Steinberg representation  by the 
Jacquet-Langlands correspondence. The image of valuation on $k$ is $2\mathbb Z$ or $\mathbb Z$ if $k$ is unramfied and ramified, respectively. Thus only the trivial representation 
has the Besel model if $k$ is ramified. By Gross-Prasad conjectures (a theorem), the unramified twist of of the Steinberg representation of $\PGL_2(k_0)$ 
$(t_0\mapsto 1, t_1\mapsto -1)$ has a (ramified) Bessel model, which fits with all of the above.

\section{Hecke algebra of odd split orthogonal groups}

This is mostly taken from \cite{GS}. Let $V$ be a split $2n+1$-dimensional orthogonal space over $k_0$. 
In particular, we can write $V= X + X^{\vee} + k_0$, where 
$X$ is a totally isotropic subspace of dimension $n$.  A basis $v_1, \ldots , v_n$ of $X$ gives a maximal split torus $T= (k_0)^{\times}$ in $\SO(V)$. 
Let $\Sigma$ be the corresponding root system of type $B_n$. 
We realize the root sytem in $E=\mathbb R e_1 + \ldots + \mathbb R e_n$, so that 
$\Sigma=\{\pm e_i\pm e_j\}\cup  \{\pm e_i\}$. 
Fix a set $\Delta$ of simple roots consisting of
roots $\alpha_1=e_1-e_2, \ldots ,\alpha_{n-1}=e_{n-1}-e_n$ and $\alpha_n=e_n$. The choice of simple roots gives us a standard Borel subgroup $B=TU$ and 
its opposite $\bar B=T \bar U$. One can think of $B$ as stabilizing the partial flag $\langle v_1 \rangle \subset \ldots \subset \langle v_1, \ldots ,v_n \rangle$. 

\smallskip 

Roots can be viewed as functionals on $E$, using the standard dot product on $E$. 
Affine roots are functionals $\alpha+m$ where $\alpha\in\Sigma$ and $m\in\mathbb Z$.
We have a set of simple affine roots $\Delta_a=\Delta\cup\{\alpha_0\}$ where
$\alpha_0=1-e_1-e_2$.  
The affine Weyl group $W_a$ (of type $\mathrm B_n$) is generated by reflections about the root hyperplanes. The connected components of 
the complement in $E$ of the union of the root hyperplanes are called chambers.
 Let $C$ be the chamber consisting of all $x\in E$ such that $\alpha_i(x)>0$ for all $i=0, \ldots, n$. The group $W_a$ 
 is generated by reflections  $s'_0$ and $s_1, \ldots ,s_n$ corresponding to
the simple affine roots satisfying the braid relations given by the following Coxeter diagram:

\begin{picture}(200,100)(-100,-15)

\put(53,30){\line(1,0){24}}
\put(22,32){\line(1,0){26}}
\put(22,28){\line(1,0){26}}
\put(90,30){\line(1,0){10}}
\put(113,30){\line(1,0){24}}
\put(142,32){\line(1,1){16}}
\put(142,28){\line(1,-1){16}}

\put(20,30){\circle{6}}
\put(50,30){\circle{6}}
\put(80,30){\circle{6}}
\put(110,30){\circle{6}}
\put(140,30){\circle{6}}

\put(160,50){\circle{6}}
\put(160,10){\circle{6}}

\put(18,38){$s_n$}
\put(160,36){$s_0'$}
\put(160,17){$s_1$}

\end{picture}

 The affine Weyl group is a semi direct product of the Weyl group $W$ 
(generated by $s_1, \ldots ,s_n$) and a normal
subgroup consisting of translations by $\lambda=(m_1, \ldots , m_n)\in \mathbb Z^n$
such that $\sum_{i=1}^n m_i$ is even. The extended affine Weyl group $W_e$, relevant
to the adjoint group $\SO(V)$, is defined by enlarging the group of translations to full $ \mathbb Z^n \cong T/T(O)$.  It turns out that $W_e$ is also a Coxeter group. 
Indeed, $W_e=W_a \cup s_0 W_a$ where 
$s_0$ is the  reflection about the hyperplane $x_1=\frac{1}{2}$. One checks that 
\[
s'_{0}=s_0 s_{1} s_0 .
\]
Now it is easy to see that $W_e$ is isomorphic to the affine Weyl group
of type $\mathrm C_n$, since it is generated by
reflections $s_0, \ldots, s_n$ and braid relations corresponding to the following
Coxeter diagram:

\begin{picture}(200,100)(-100,-15)

\put(53,30){\line(1,0){24}}
\put(22,32){\line(1,0){26}}
\put(22,28){\line(1,0){26}}
\put(90,30){\line(1,0){10}}
\put(113,30){\line(1,0){24}}
\put(142,32){\line(1,0){26}}
\put(142,28){\line(1,0){26}}

\put(20,30){\circle{6}}
\put(50,30){\circle{6}}
\put(80,30){\circle{6}}
\put(110,30){\circle{6}}
\put(140,30){\circle{6}}
\put(170,30){\circle{6}}

\put(18,38){$s_0$}
\put(48,38){$s_1$}
\put(168,38){$s_n$}

\end{picture}

Let $I$ be the Iwahori subgroup of $\SO(V)$ corresponding to the chamber $C$ i.e. it is generated by $T(O)$ and the affine root spaces $U_{\alpha}$ for all affine roots $\alpha$ such 
that $\alpha >0$ on $C$.  The $I$-double cosets in $\SO(V)$ are parameterized by $W_e$. 
Let $l_0$ be the weighted length function on $W_e$ such that $l_0(s_0)=0$ and
it is 1 on other simple reflections.
 Then $[IwI:I]= q^{l_0(w)}$ for every $w\in W_e$.

Let $H$ denote the  Iwahori Hecke algebra of $\SO(V)$. For any $w\in W_a$
let $t_w\in H^+$ be the characteristic function of the double coset parameterized by
$w$. For simplicity, let $t_i$ denote $t_{s_i}$. Then $t_i$
satisfy quadratic relations $t_0^{2}=1$ and $(t_{i}-q)(t_{i}+1)=0$, if $i\neq 0$, and
braid relations given by the above Coxeter diagram. In fact, as an abstract
algebra, $H$ is generated by $t_{0}, \ldots, t_{n}$ modulo these
quadratic and braid relations.  Let $H_0$ and $H_n$ be the finite dimensional subalgebras generated by all $t_i$ except $t_n$ and $t_0$, respectively. 
We have  Bernstein decompositions 
\[ 
H \cong A\otimes H_0 \cong A \otimes H_n. 
\] 
where $A$ is isomorphic to $\mathbb C[T/T(O)]$ and the isomorphism has the same property with respect to the Jacquet functor as in the rank one case $(G\cong \PGL_2(k_0))$.

Then $H_0$ and $H_n$ each have 2 characters $\epsilon^{+}, \epsilon^-$ and $\mathrm{sgn}$, $\mathrm{sgn}'$, respectively, 
 such that $t_i\mapsto -1$ for all $i\neq 0,n$, with the rest of definition as in the rank one case. We also have four projective $H$-modules 
obtained by inducing these characters to $H$.

 \section{Special Bessel Models}

 Let $P=LN$ be a standard parabolic subgroup corresponding to the short root $e_n$, with the Levi $L= (k_0^{\times})^{n-1} \times \PGL_2$,
  where $\PGL_2$ corresponds to the short root $e_n$. 
 In other words, the group $P$ is defined as the stabilizer of the partial flag $\langle v_1 \rangle \subset \ldots \subset \langle v_1, \ldots ,v_{n-1} \rangle$. 
 In particular, $N/[N,N]$ is a vector space on which $L$ acts. 
 The action of the $\PGL_2$ factor is trivial on the subspace spanned by the lines corresponding to the roots $e_1-e_2, \ldots , e_{n-2}-e_{n-1}$,  and an 
 irreducible representation on the 3-dimensional subspace spanned by the lines corresponding to the roots $e_{n-1} - e_n, e_{n-1}, e_{n-1} + e_n$. Let 
 $x_i$ be the pinning coordinates for the roots $e_{i} - e_{i+1}$, and $x_n$  for the root $e_{n-1}+e_n$. 
 (The pinnings are chosen so that the subgroup generated by $T(O)$ and the affine root groups $U_{\alpha}$ for all $\alpha$ such that $\alpha(0) \geq 0$ is a hyper-special 
 maximal compact subgroup.) 
 For every $a\in F^{\times}$ define a character 
 \[ 
 \psi_a : N \rightarrow \mathbb C^{\times} 
 \] 
 by 
 \[ 
 \psi_a(n) =\psi( x_1 + \ldots + x_{n-1} + a x_n) 
 \] 
 where $\psi: k_0 \rightarrow \mathbb C^{\times}$ has conductor $\varpi O$. 
 The stabilizer of $\psi_a$ in $L$ is a torus $T_k\subset \PGL_2$.  We choose $a=u \in O^{\times}$ so that $k$ is the unramified extension of $k_0$.  If $a=r$ is a 
 uniformizing element in $O$, then $k$ is ramified. Abusing notation, we extend $\psi_u$ to  a character $\chi$ of $NT_k$, trivial on $T_k$. 
 We also have two characters $\chi^{\pm}$ of $NT_k$ in the 
 ramified case, due to disconnectedness of $T_k$.  For sake of uniform exposition, let $\chi$ denote any of these characters of $T_k N$, for a moment.  The 
 Bessel space is 
 \[ 
 \Pi= \ind_{NT_k}^G( \chi).
 \]  
 
 \begin{lemma}  As $T$-modules, $\Pi_{\bar U}^{T(O)} \cong \mathbb C[T/T(O)]$. 
 \end{lemma}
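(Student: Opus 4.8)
The plan is to strip off the unipotent radical $N$ and then the torus factor $(k_0^{\times})^{n-1}$ of $L$, reducing everything to the rank-one Bessel model computed in Section~2.

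\emph{Step 1: descent to the Levi $L$.} Let $\bar P=L\bar N$ be the parabolic opposite to $P$. By induction in stages $\Pi=\ind_{NT_k}^G\chi=\ind_P^G(\ind_{NT_k}^P\chi)$, and since $P\backslash G$ is compact, $\ind_P^G$ agrees with normalized parabolic induction $i_P^G$ up to an (unramified) twist by a power of the modulus character of $P$. Combining this with the identity $r_{\bar P}^G\circ i_P^G\cong\mathrm{id}$ — i.e.\ the fact that, for the opposite parabolic, only the open cell survives in the geometric-lemma filtration of $r_{\bar P}^G i_P^G$ — we obtain, as representations of $L$ and up to an unramified twist,
\[
\Pi_{\bar N}\;\cong\;\ind_{NT_k}^P\chi .
\]
Writing $\bar U=\bar U_L\ltimes\bar N$ with $\bar N$ normal and $\bar U_L=\bar U\cap L$ the opposite unipotent radical of the $\PGL_2$-factor of $L$, we have $\Pi_{\bar U}\cong(\Pi_{\bar N})_{\bar U_L}$, so it remains to compute $(\ind_{NT_k}^P\chi)_{\bar U_L}$ together with its $T(O)$-invariants.

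\emph{Step 2: structure of $\ind_{NT_k}^P\chi$ over $L$.} Because $N$ is normal in $P=N\rtimes L$ with $N\cap L=1$, because $\chi|_N=\psi_u$, and because $T_k$ is by definition the stabilizer of $\psi_u$ in $L$, restriction of functions from $P$ to $L$ gives an isomorphism of $L$-modules $(\ind_{NT_k}^P\chi)|_L\cong\ind_{T_k}^L(\chi|_{T_k})$. Writing $L=L'\times\PGL_2$ with $L'=(k_0^{\times})^{n-1}$ and $T_k\subset\PGL_2$, induction from a product of subgroups factors as
\[
\ind_{T_k}^L(\chi|_{T_k})\;\cong\;C^\infty_c(L')\otimes\Pi_0,\qquad \Pi_0:=\ind_{T_k}^{\PGL_2}(\chi|_{T_k}),
\]
where $C^\infty_c(L')$ carries the regular representation of $L'$ and $\Pi_0$ is precisely the rank-one Bessel model space of Section~2: $\ind_{T_k}^{\PGL_2}(1)$ in the unramified case, and $\Pi_0^{\pm}$ in the ramified cases.

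\emph{Step 3: assembling.} Since $\bar U_L\subset\PGL_2$ acts trivially on the factor $C^\infty_c(L')$, Steps 1--2 give $\Pi_{\bar U}\cong C^\infty_c(L')\otimes(\Pi_0)_{\bar U_L}$, still up to an unramified twist. Now $T=L'\times T''$ with $T''$ the diagonal torus of $\PGL_2$, and $T(O)=L'(O)\times T''(O)$, the factor $L'(O)$ acting only on $C^\infty_c(L')$ and $T''(O)$ only on $(\Pi_0)_{\bar U_L}$; hence
\[
\Pi_{\bar U}^{T(O)}\;\cong\;C^\infty_c(L')^{L'(O)}\otimes(\Pi_0)_{\bar U_L}^{T''(O)}\;\cong\;\bbC[L'/L'(O)]\otimes\bbC[T''/T''(O)],
\]
where the isomorphism in the second factor is the rank-one computation of Section~2 (in all three cases the $T''(O)$-invariants of the Jacquet module of $\Pi_0$ equal $A\cong\bbC[\bbZ]$). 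As $L'/L'(O)\times T''/T''(O)\cong T/T(O)$, and as twisting the regular representation of the free abelian group $T/T(O)$ by a character produces an isomorphic $T$-module, we conclude $\Pi_{\bar U}^{T(O)}\cong\bbC[T/T(O)]$ as $T$-modules.

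\emph{Expected main obstacle.} The only genuinely non-formal input is the vanishing of all non-open contributions in the geometric lemma for $r_{\bar P}^G i_P^G$ (second adjointness); everything else is induction in stages, the product decomposition of $L$, and the already-established rank-one case. The point requiring care is the bookkeeping of handedness and of modulus-character twists, so that the residual $T$-action ultimately matches the translation action on $T/T(O)$ used in Section~2.
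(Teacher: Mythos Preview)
Your Step~1 rests on the assertion that $r_{\bar P}^G\circ i_P^G\cong\mathrm{id}$, which you identify with second adjointness and with the claim that ``only the open cell survives'' in the geometric-lemma filtration. This is not true. Already for $G=\GL_2$, $P=B$ and $\sigma$ a character of $T$, the Jacquet module $r_{\bar B}^G i_B^G\sigma$ is two-dimensional, with composition factors $\sigma$ and $w_0\sigma$ (up to modulus twists); the closed $\bar B$-orbit on $B\backslash G$ contributes as well as the open one. Second adjointness gives a counit $r_{\bar P}^G i_P^G\to\mathrm{id}$, not an isomorphism. There is a further mismatch: the representation $\ind_{NT_k}^P\chi$ you feed into $\ind_P^G$ is a genuine $P$-module on which $N$ acts nontrivially (through $\psi_u$), so it is not inflated from $L$ and the usual geometric-lemma formalism for $r_{\bar P}^G i_P^G$ does not apply to it without modification.

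What the paper does is exactly the missing ingredient: it writes $G=NT_kW_n\bar B$ (using $\PGL_2=T_k\cdot(\PGL_2\cap\bar B)$ inside $L$), obtains a $\bar B$-filtration of $\Pi$ with pieces $\Pi_w$ indexed by $w\in W_n$, and shows that $(\Pi_w)_{\bar U}=0$ for $w\neq 1$. The vanishing is not formal: for $w\neq1$ one finds a simple root $\alpha\neq e_n$ with $w^{-1}(\alpha)<0$, and then the genericity of $\psi$ on the $\alpha$-root line forces the $\bar U$-coinvariants to vanish (the argument is the one in \cite{CS1}). Only the open piece $C_c^\infty(T_k\backslash\bar P)$ survives, and its $\bar N$-coinvariants give $C_c^\infty(T_k\backslash L)$, after which your Steps~2--3 (which are correct) finish the computation. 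In short, the ``vanishing of non-open contributions'' you flag as the expected obstacle really is the heart of the matter, but it is a consequence of the generic character $\psi$, not of any general property of opposite Jacquet functors.
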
 
 \begin{proof}  Let $W_n\subset W$ consist of $w\in W$ such that $w^{-1}(e_n)>0$. We have a Bruhat decomposition 
 \[ 
 G= P W_n \bar B = N L W_n \bar B = NT_k  W_n \bar B 
 \] 
 where for the second equality we used that $\PGL_2= T_k \cdot (\PGL_2 \cap \bar B)$.  It follows that $\Pi$, as a $\bar B$-module, has a filtration arising from the 
 geometry of the Bruhat decomposition with subquotients $\Pi_w$ parameterized by $w\in W_n$. If $w\neq 1$, then there exists a simple root $\alpha$, 
 necessarily different from $e_n$, such that $w^{-1}(\alpha) <0$. 
 Now it is easy to check \cite{CS1} that $(\Pi_w)_{\bar U}=0$. The bottom piece of the filtration is isomorphic to 
 $C^{\infty}_c(T_k\backslash \bar P)$.  It is clear that 
 \[ 
 (C^{\infty}_c(T_k\backslash \bar P))_{\bar N} \cong C^{\infty}_c(T_k\backslash L)\cong  C^{\infty}_c((k_0^{\times})^{n-1}) \otimes  C^{\infty}_c(T_k\backslash \PGL_2).
 \]  
 Now lemma follows from the $\PGL_2$-computation. 
 \end{proof} 
 
 In view of the isomorphism of $A\cong C[T/T(O)]$-modules $\Pi^I \cong \Pi_{\bar U}^{T(O)}$, it follows that $\Pi^I$ is a free $A$-module of rank one. In order to determine the 
 $H$-structure, we need to compute the action of the finite subalgebra $H_0$ or $H_n$ on a generator of the $A$-module. 
 We shall do this by carefully picking the Iwahori subgroup. Recall that the chamber $C$ was defined by 
  $\alpha_i >0$ for all simple roots $\alpha_i$, $i=0, \ldots , n$. 
  
  \subsection{Unramified case} We let $C_u=-C$ and, abusing notation, let $I$ be the Iwahori subgroup generated by $T(O)$ and affine root groups $U_{\alpha}$ for 
  all $\alpha >0$ on $C_u$. Note that previously defined simple reflections $s_1, \ldots , s_{n-1}, s_n$, corresponding to the roots,  
  $e_1-e_2, \ldots ,e_{n-1}-e_n, e_n$ are still simple for this choice of the chamber. In particular, we have 
  a finite subalgebra $H_n$ generated by $t_i$, the characteristic functions of $Is_iI$, for $i=1, \ldots , n$. Let $\mathrm{sgn}'$ be a character of $H_n$ such that 
  $t_n \mapsto q$ and $t_i\mapsto -1$ for $i\neq n$. 
  
  \begin{theorem} Let $\Pi$ be the unramified Bessel model space.  Then $\Pi^I \cong H\otimes_{H_n} \mathrm{sgn}'$ as $H$-modules. 
  \end{theorem}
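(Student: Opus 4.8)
The plan is to produce one explicit $I$-fixed vector $v_0\in\Pi^I$, check that $H_n$ acts on it through $\mathrm{sgn}'$, and check that its image under $\Pi^I\cong\Pi_{\bar U}^{T(O)}\cong\mathbb C[T/T(O)]$ is a unit; everything then follows formally. Indeed, given such a $v_0$, Frobenius reciprocity yields an $H$-module homomorphism $\phi\colon H\otimes_{H_n}\mathrm{sgn}'\to\Pi^I$ with $\phi(1\otimes1)=v_0$. By the Bernstein decomposition $H\cong A\otimes H_n$ the source is a free $A$-module of rank one on $1\otimes1$, while by the Lemma the target is a free $A$-module of rank one; since $A=\mathbb C[T/T(O)]$ is a domain, the $A$-linear map $\phi$ is an isomorphism as soon as $\phi(1\otimes1)=v_0$ maps to a unit (equivalently, a scalar multiple of a monomial) in $\mathbb C[T/T(O)]$. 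So the two tasks are (i) $t\cdot v_0=\mathrm{sgn}'(t)\,v_0$ for all $t\in H_n$, and (ii) the class of $v_0$ in $\mathbb C[T/T(O)]$ is a unit.

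For the construction, let $\calJ$ be the parahoric subgroup generated by $I$ and $s_n$. Since $s_n$ is the reflection in $e_n$, which is the root of the $\PGL_2$-factor of $L$, one checks that $\calJ$ is contained in the hyper-special maximal compact $G(O)$ determined by the chosen pinning, that $\calJ\cap\PGL_2$ is a maximal compact subgroup of $\PGL_2$ containing $T_k$ (here we use $a=u\in O^\times$, so that $T_k$ is unramified), and that $N\cap\calJ=N\cap I$. The choice $C_u=-C$ is made precisely so that every affine root group of $I$ lying in $N$ has level $\varpi O$; in particular the pinning coordinates $x_1,\dots,x_{n-1},x_n$ of any element of $N\cap I$ lie in $\varpi O$, the conductor of $\psi$, so $\psi_u$ is trivial on $N\cap I=N\cap\calJ$. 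Combined with $\chi|_{T_k}=1$ and the factorization of $\calJ$ along $P=LN$, this gives $\chi|_{NT_k\cap\calJ}=1$, so the function $v_0\in\Pi=\ind_{NT_k}^G\chi$ supported on $NT_k\calJ$ with $v_0(1)=1$ is well defined; it is right $\calJ$-invariant, hence lies in $\Pi^I$. Because $Is_nI\subset\calJ$, the convolution computation $t_n\cdot v_0=[Is_nI:I]\,v_0=q\,v_0$ is immediate.

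The real content is (i) for $i=1,\dots,n-1$, where I would run a rank-one computation in the copy of $\SL_2$ attached to $\alpha_i=e_i-e_{i+1}$. With the chamber $C_u$ one has $U_{-\alpha_i}(O)\subset I$ and $U_{\alpha_i}\cap I=U_{\alpha_i}(\varpi O)$, and the $q$ cosets of $Is_iI/I$ are represented by $u_{-\alpha_i}(c)\dot s_i$, $c\in O/\varpi O$, so $(t_i v_0)(1)=\sum_{c\in O/\varpi O}v_0(u_{-\alpha_i}(c)\dot s_i)$. Rewriting $u_{-\alpha_i}(c)\dot s_i$ by the $\SL_2$ identity: for $c\in O^\times$ it equals $u_{\alpha_i}(1/c)$ times an element of $\calJ$, with $u_{\alpha_i}(1/c)\in N$ of pinning coordinate $x_i=1/c$, so that term contributes $\psi_u(u_{\alpha_i}(1/c))=\psi(1/c)$; and for $c\in\varpi O$ the element $u_{-\alpha_i}(c)\dot s_i$ lies in $G(O)$ and reduces modulo $\varpi$ to (a conjugate of) the reflection $s_i$, which is not in the reduction of $NT_k\calJ$, so those terms vanish. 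Summing, $(t_iv_0)(1)=\sum_{c\in(O/\varpi O)^\times}\psi(1/c)=-1$, because $\psi$ induces a nontrivial additive character of $O/\varpi O$. The same computation, after translating $g$ into the finitely many relevant cosets (or, equivalently, using that $(1+t_i)v_0$ is invariant under the rank-one parahoric $\langle I,s_i\rangle$), gives $(1+t_i)v_0=0$, i.e. $t_iv_0=-v_0$; hence $t\cdot v_0=\mathrm{sgn}'(t)v_0$ for all $t\in H_n$. For $i=n-1$ the root $e_{n-1}-e_n$ is adjacent to $e_n$ and its $\SL_2$ does not commute with the $\PGL_2$-factor, but only the fact that $\psi_u$ restricts to a character of $U_{e_{n-1}-e_n}(O)$ of conductor exactly $\varpi O$ enters, so the argument is unchanged. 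I expect this rank-one bookkeeping — pinning down the coset representatives for the chamber $C_u$ and verifying the vanishing $v_0(\dot s_i)=0$ — to be the main obstacle; the rest is formal.

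Finally, for (ii) I would track $v_0$ through the filtration used to prove the Lemma: the subquotients $\Pi_w$ with $w\ne1$ have vanishing $\bar U$-Jacquet module, so the image of $v_0$ in $\Pi_{\bar U}$ is read off from the bottom piece $C^\infty_c(T_k\backslash\bar P)$, where it is a nonzero multiple of the characteristic function of the image of $\calJ$. Passing to $\bar N$-coinvariants and then to $\bar U_L$-coinvariants turns this into $\mathbf 1_{(O^\times)^{n-1}}\otimes\mathbf 1_{T_k\backslash(\calJ\cap\PGL_2)}$ inside $C^\infty_c((k_0^\times)^{n-1})\otimes C^\infty_c(T_k\backslash\PGL_2)_{\bar U_{\PGL_2}}$; taking $T(O)$-invariants and invoking the rank-one ($\PGL_2$) computation of Section 2 — under which $\mathbf 1_K\mapsto 1$ — together with the trivial computation on the $(k_0^\times)^{n-1}$-factor, one finds that $v_0$ maps to a nonzero scalar multiple of $\theta_0\in\mathbb C[T/T(O)]$, which is a unit. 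This establishes (ii), hence by the first paragraph the theorem.
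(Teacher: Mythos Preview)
Your proposal is correct and follows essentially the same route as the paper: construct the $\calJ$-invariant function $v_0$ (the paper's $f_0$) supported on $NT_k\calJ$, verify it is an $H_n$-eigenvector of type $\mathrm{sgn}'$ via the same $\SL_2$ coset computation giving $\sum_{c\in (O/\varpi O)^\times}\psi(1/c)=-1$, and then conclude by matching free rank-one $A$-modules. The only place where the paper is more explicit than your outline is the step from $(t_iv_0)(1)=-1$ to $t_iv_0=-v_0$: the paper first shows that $t_i\ast f_0$ is supported on $NT_kI\cup NT_ks_iI$ (using $NT_kIs_iI\subset NT_kI\cup NT_ks_iI$ via the same $\SL_2$ identity) and then computes directly $(t_i\ast f_0)(s_i)=\sum_{t}\psi(t)=0$, whereas you defer this to ``the same computation'' and a parahoric-invariance remark; your acknowledgement that verifying $v_0(\dot s_i)=0$ is the main bookkeeping point is exactly right.
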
 
  \begin{proof} 
   Let $K= I \cup Is_n I$. Then $T_k \subset K$, in fact, note that $K=T_k I$. Hence 
  we have a decomposition 
  \[ 
  NT_k I= NK = N (K\cap L) (K\cap \bar N) 
  \] 
  where the last factorization is a direct product as sets. Since the choice of $I$ guarantees that $\psi_u$, when restricted to $I\cap N$ is trivial, we have a unique function 
  $f_0\in  \Pi^I$  supported on $NT_k I$ such that 
  $f_0(n l \bar n ) = \psi_u(n)$  for all $n\in N$, $l\in K\cap L$ and $\bar n \in K\cap \bar N$.  Note 
   that $f_0$ sits in the bottom piece of the $\bar B$-filtration of $\Pi$ and under the isomorphism 
  $\Pi^I\cong \mathbb C[T/T(O)]$ it maps to the characteristic function of $T(O)$. We now need the following lemma: 
 
  \begin{lemma} For every $i=1, \ldots, n$, let $t_i \ast f_0$ denote the action of $t_i \in H_n$ on $f_0\in \Pi^I$. Then 
  $t_n\ast f_0= q f_0$ and $t_i \ast f_0 =- f_0$ for $i\neq n$. 
  \end{lemma} 
 \begin{proof} Since $f_0$ is right $K$-invariant, for $K=I\cup Is_n I$, it follows at once that $t_n \ast f_0= qf_0$. We now use the decomposition 
 $G=U W_e I$ and observe that $NK \subset UI \cup Us_n I$. It follows that $f_0(s_i)=0$ for all $i\neq n$. This will be of use to compute the action of 
 $t_i$ on $f_0$, for $s_i$ the reflection corresponding to $\alpha_i=e_i-e_{i+1}$. We shall simplify notation, in order to avoid writing the subscript $i$ many times, and 
 will write $s=s_i$ and  $\alpha=\alpha_i$. Note that the affine root space $U_{\alpha}$ is not contained in $I$, but $U_{-\alpha}$ and $U_{\alpha+1}$ are.  
 Let $x_{\beta} : O \rightarrow U_{\beta}$ denote a pinning for any affine root $\beta$. Then 
 \[ 
 IsI= \cup_{t\in O/\varpi O} x_{-\alpha}(t) sI. 
 \] 
 Recall that 
 \[ 
 (t_i \ast f_0)(g) =\int_{IsI} f_0(gh) ~dh. 
 \] 
 Thus, if $(t_i \ast f_0)(g)\neq 0$, then there exists $h\in IsI$ such that $gh\in NT_kI$. Since $h^{-1}\in IsI$, it follows that  
 \[ 
 g\in NT_kIsI =  \cup_{t\in O/\varpi O} N T_k x_{-\alpha}(t) sI. 
 \] 
There are two cases to discuss. If $t\in \varpi O$ then $N T_k x_{-\alpha}(t) sI= NT_k sI$. If $t \in (O/ \varpi O)^{\times}$, then we have an identity 
 $x_{\alpha}(1/t) x_{-\alpha}(-t) \equiv x_{-\alpha}(t) s \pmod{T(O)}$, and 
 \[ 
 N T_k x_{-\alpha}(t) sI = N T_k x_{\alpha}(1/t)x_{-\alpha}(t)I = NT_kI. 
 \] 
  We have shown that  $t_i\ast f_0$ is supported on 
 $NT_kI$ and $NT_k sI$ so it suffices to compute the value of $t_i\ast f_0(g)$  for $g=1$ and $s$. To that end, 
 \[ 
 (t_i\ast f_0)(s) = \sum_{t\in O/\varpi O} f_0(s x_{-\alpha}(t) s) =\sum_{t\in O/\varpi O} f_0( x_{\alpha}(t) ) =\sum_{t\in O/\varpi O} \psi(t) =0, 
 \] 
 \[ 
 (t_i\ast f_0)(1) = \sum_{t\in O/\varpi O} f_0( x_{-\alpha}(t) s) =\sum_{t\in (O/\varpi O)^{\times}} f_0(x_{\alpha}(1/t) x_{-\alpha}(-t)) =\sum_{t\in (O/\varpi O)^{\times}} \psi(1/t) =-1. 
 \] 
 We note that the second identity follows from the fact that $f_0(s)=0$. The lemma is proved. 
 \end{proof} 
 Now it is easy to prove the theorem. 
 The Frobenius reciprocity implies that we have a natural map from $H\otimes_{H_n} \mathrm{sgn}'$ to $\Pi^I$ that sends $1\otimes 1$ to $f_0$. This map is an isomorphism since $H\otimes_{H_n} \mathrm{sgn}' \cong \Pi^I$ as $A$-modules. 
 
 \end{proof}  
 
  \subsection{Ramified case} Let $w$ be the permutation defined by $i\mapsto n+1-i$ for all $i=1, \ldots, n$. 
  We let $C_r=w(C)$ and, abusing notation, let $I$ be the Iwahori subgroup generated by $T(O)$ and affine root groups $U_{\alpha}$ for 
  all $\alpha >0$ on $C_r$.  Note that previously defined simple reflections $s_1, \ldots , s_{n-1}$, corresponding to the roots,  
  $e_1-e_2, \ldots , e_{n-1}-e_n$ are still simple for this choice of the chamber. Let $s_0$ be the reflection corresponding to the affine functional $e_n-1/2$. 
  Observe that $s_0$ is not a root reflection, however, it normalizes the chamber $C_r$.  Let $H_0$ be 
  the finite algebra generated by $t_i$, the characteristic functions of $Is_iI$, for $i=0, \ldots , n-1$.  Note that $t_0^2=1$. 
  Let $\mathrm{\epsilon}^{pm}$ be two characters of $H_0$ such that 
  $t_0 \mapsto \pm 1$ and $t_i\mapsto -1$ for $i\neq 0$. 
  
  \begin{theorem} Let $\Pi^{\pm}$ be the two ramified Bessel model spaces.  Then $(\Pi^{\pm})^I \cong H\otimes_{H_0} {\epsilon}^{\pm} $ as $H$-modules. 
  \end{theorem}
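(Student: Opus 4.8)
The plan is to mirror the structure of the proof in the unramified case. By the Lemma above, $(\Pi^{\pm})^I \cong (\Pi^{\pm})_{\bar U}^{T(O)} \cong \mathbb C[T/T(O)]$ as $A$-modules, so each $(\Pi^{\pm})^I$ is free of rank one over $A$. Hence, by the Bernstein decomposition $H \cong A \otimes H_0$, it suffices to exhibit an explicit generator $f_0^{\pm} \in (\Pi^{\pm})^I$ of the $A$-module that maps to the characteristic function of $T(O)$ under the Jacquet isomorphism, and to compute that $H_0$ acts on $f_0^{\pm}$ through the character $\epsilon^{\pm}$; then Frobenius reciprocity gives a map $H \otimes_{H_0} \epsilon^{\pm} \to (\Pi^{\pm})^I$ sending $1 \otimes 1 \mapsto f_0^{\pm}$, which is an isomorphism because it is so as $A$-modules.

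First I would identify the right compact open subgroup playing the role of $K$ from the unramified argument. Here the natural candidate is $K^{\pm} = I \cup I s_0 I$ (equivalently $\tilde I$, the normalizer of $I$ in the relevant rank-one $\PGL_2$-factor), so that $t_0$ acts on a $K^{\pm}$-invariant-up-to-$\chi^{\pm}$ function by $\pm 1$. Since $k$ is ramified, $T_k \subset \tilde I$ and $\chi^{\pm}$ extends to a character of the group generated by $I$ and $s_0$ that is trivial on $I$; this is exactly the phenomenon already noted in Section 2 in the rank-one case. I would then write a decomposition $N T_k K^{\pm} = N K^{\pm} = N (K^{\pm}\cap L)(K^{\pm} \cap \bar N)$, a direct product as sets, using $\PGL_2 \cap \tilde I$ in place of $K \cap L$, and define $f_0^{\pm} \in (\Pi^{\pm})^I$ as the unique function supported on $N T_k K^{\pm}$ with $f_0^{\pm}(n l \bar n) = \psi(n)\,\chi^{\pm}(\ell_0)$, where the choice of $C_r$ guarantees $\psi|_{I\cap N}$ is trivial. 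As before $f_0^{\pm}$ sits in the bottom piece of the $\bar B$-filtration from the Lemma and maps to the characteristic function of $T(O)$.

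The remaining point is the analogue of the interior Lemma: to show $t_0 \ast f_0^{\pm} = \pm f_0^{\pm}$ and $t_i \ast f_0^{\pm} = -f_0^{\pm}$ for $i = 1, \ldots, n-1$. The statement for $t_0$ is immediate from right $K^{\pm}$-invariance (twisted by $\chi^{\pm}$) since $t_0^2 = 1$. For $i = 1, \ldots, n-1$ the simple reflections $s_i$ correspond to the roots $e_i - e_{i+1}$, which behave exactly as in the unramified computation: using $G = U W_e I$ one sees $f_0^{\pm}(s_i) = 0$ for $i \neq 0$, writes $I s_i I = \cup_{t \in O/\varpi O} x_{-\alpha_i}(t) s_i I$, identifies the support of $t_i \ast f_0^{\pm}$ as $N T_k I \cup N T_k s_i I$ via the identity $x_\alpha(1/t) x_{-\alpha}(-t) \equiv x_{-\alpha}(t) s \pmod{T(O)}$, and then the two character sums $\sum_{t} \psi(t) = 0$ and $\sum_{t \in (O/\varpi O)^\times} \psi(1/t) = -1$ give the result. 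The main subtlety, and the step I would be most careful about, is verifying that with the chamber $C_r = w(C)$ the simple affine root group $U_{\alpha_i}$ for the finite roots $e_i - e_{i+1}$ still sits outside $I$ while $U_{-\alpha_i}$ and $U_{\alpha_i + 1}$ lie inside — so that the coset decomposition of $I s_i I$ used above is valid — and that $s_0$, being a genuine Iwahori-normalizing element rather than a root reflection, interacts correctly with $N T_k$ in the support analysis; but the key geometric fact that $\psi$ is trivial on $I \cap N$ is exactly what the choice of $C_r$ is designed to ensure. Granting the interior lemma, the theorem follows as in the unramified case: Frobenius reciprocity produces $H \otimes_{H_0} \epsilon^{\pm} \to (\Pi^{\pm})^I$ with $1 \otimes 1 \mapsto f_0^{\pm}$, an isomorphism of $A$-modules, hence of $H$-modules.
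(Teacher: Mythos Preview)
Your proposal is correct and follows essentially the same approach as the paper: define $K = I \cup s_0 I$ (with $s_0$ normalizing $I$), use $T_k \subset K = T_k I$ to build the generator $f_0^{\pm}$ supported on $NT_k I = NK$ via the decomposition $N(K\cap L)(K\cap \bar N)$, verify that $H_0$ acts on $f_0^{\pm}$ by $\epsilon^{\pm}$, and conclude by Frobenius reciprocity and the Bernstein decomposition. The paper's own proof is only a sketch that ``leaves details to the reader,'' and your outline of the $t_i$-computation for $i=1,\ldots,n-1$ and your flagging of the chamber/affine-root compatibility check are exactly the details one must supply.
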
 
  \begin{proof} We just give a sketch of the proof. 
   In this case $s_0$ normalizes $I$ and let $K= I \cup s_0 I$. Then $T_k \subset K$,  $K=T_k I$ and  
    we have a decomposition 
  \[ 
  NT_k I= NK = N (K\cap L) (K\cap \bar N) 
  \] 
  where the last factorization is a direct product as sets. The choice of $I$ guarantees that $\psi_r$, when restricted to $I\cap N$ is trivial. 
  Let $f^{+}_0\in (\Pi^{+}) ^I$  supported on $NT_kI$ such that 
  $f^+_0(n l \bar n ) = \psi_u(n)$ for all $n\in N$, $l\in K\cap L$ and $\bar n \in K\cap \bar N$, and $f^{-}_0\in (\Pi^{-}) ^I$  such that 
  $f^-_0(n l \bar n ) = \pm \psi_u(n)$ where the sign depends whether $l \in K\cap L$ is in $I\cap L$ or not.  The proof now proceeds in the same way 
  as in the unramified case, by showing that $f_0^{\pm}$ are eigenfunctions for $H_0$. We leave details to the reader. 
  
  \end{proof} 
 
 \section{Steinberg representation} 
 
 Let $G$ be a split reductive group over $k_0$. 
 Let $\mathrm{St}$ be the Steinberg representation of $G$.  Let $B$ be a Borel subgroup of $G$, 
$\bar U$ the unipotent radical of $\bar B$, the Borel opposite to $B$, and $X_w=B w\bar U$ are the Bruhat cells. Write $X=B\bar U$ for the open cell. 
 For any subset $J$ of simple roots $\Pi$, let $P_J$ be the standard parabolic subgroup associated to $J$ (and containing $B$).
In particular, $P_{\emptyset}=B$.  Let $C^{\infty}_c(P_J \setminus G)$ be the space of compactly supported smooth $P_J$-invariant functions on $G$. 
Let $\mathrm{St}$ be the Steinberg representation of $G$.  
We use the following realization of the Steinberg representation: 
\[   \mathrm{St} = C^{\infty}_c(B\setminus G) / \sum_{ \emptyset  \neq J \subset \Pi} C^{\infty}_c(P_J \setminus G) .
\]
Thus we have a $\bar B$-equivariant map $\Omega: C_c^{\infty}(B\setminus X)\rightarrow\mathrm{St}$ given as the composition of natural maps 
\begin{align} \label{eqn cont steinb}
    C_c^{\infty}(B\setminus X) \rightarrow  C^{\infty}_c(B\setminus G) \rightarrow \mathrm{St}. 
\end{align}


\begin{prop} \label{prop steinberg} 
The map $\Omega$ is a $\bar B$-equivariant isomorphism of $C_c^{\infty}(B\setminus X)$ and $\mathrm{St}$. 
\end{prop}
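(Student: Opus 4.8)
The plan is to peel off two formal reductions and then prove the remaining, purely geometric statement by induction on the semisimple rank.

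First, since $P_{\{\alpha\}}\subseteq P_J$ whenever $\alpha\in J$, we have $C^\infty_c(P_J\setminus G)\subseteq C^\infty_c(P_{\{\alpha\}}\setminus G)$, so $\sum_{\emptyset\neq J\subseteq\Pi}C^\infty_c(P_J\setminus G)=M$ where $M:=\sum_{\alpha\in\Pi}C^\infty_c(P_{\{\alpha\}}\setminus G)$, and hence $\mathrm{St}=C^\infty_c(B\setminus G)/M$. Since $B\setminus G$ is compact, all the spaces below are just spaces of locally constant functions. The $\bar B$-equivariance of $\Omega$ is immediate: $X=B\bar U$ is stable under right translation by $\bar B=T\bar U$ (as $T$ normalizes $\bar U$), and the two arrows composing $\Omega$ — extension by zero $C_c^\infty(B\setminus X)\hookrightarrow C^\infty(B\setminus G)$ and the defining projection onto $\mathrm{St}$ — are $\bar B$-equivariant. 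So only bijectivity needs proof.

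Next I would reduce to a statement not mentioning $\mathrm{St}$. Let $Z:=G\setminus X$, the union of the non-open Bruhat cells $X_w$ ($w\neq 1$); it is closed and stable under $B$ on the left and $\bar B$ on the right. Decomposing the compact space $B\setminus G$ into the open set $X$ and its closed complement $Z$ gives a $\bar B$-equivariant exact sequence
\[ 0\longrightarrow C_c^\infty(B\setminus X)\longrightarrow C^\infty(B\setminus G)\stackrel{\rho}{\longrightarrow}C^\infty(B\setminus Z)\longrightarrow 0, \]
$\rho$ being restriction of functions to $Z$. Comparing with $0\to M\to C^\infty(B\setminus G)\to\mathrm{St}\to 0$ (valid since $M$ is $G$-stable), one sees that $\Omega$ and the restriction $\rho|_M\colon M\to C^\infty(B\setminus Z)$ have the same kernel $M\cap C_c^\infty(B\setminus X)$ and the same cokernel $C^\infty(B\setminus G)/(M+C_c^\infty(B\setminus X))$. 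Thus it suffices to show that $\rho|_M$ is an isomorphism.

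This I would prove by induction on $|\Pi|$, and this is where the real work lies. If $|\Pi|=0$ both sides are zero; if $|\Pi|=1$, then $B\setminus Z$ is a single point, $M=C^\infty(G\setminus G)=\mathbb C\cdot\mathbf 1$, and $\rho(\mathbf 1)\neq 0$, so $\rho|_M$ is an isomorphism $\mathbb C\to\mathbb C$. For the inductive step fix $\beta\in\Pi$, let $Q=P_{\Pi\setminus\{\beta\}}$ be the associated maximal proper parabolic with Levi $L$ of semisimple rank $|\Pi|-1$, and use the proper fibration $q\colon B\setminus G\to Q\setminus G$, whose fibres are the compact flag variety $B_L\setminus L$ of $L$. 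One checks that $X$ lies over the big cell of $Q\setminus G$ and is, fibrewise, the big cell of a fibre; consequently $Z$ is the disjoint union of $q^{-1}(Z_Q)$ — closed, with $Z_Q$ the non-generic locus in $Q\setminus G$ — and a sub-bundle over the big cell of $Q\setminus G$ whose fibre is the non-generic locus of a fibre, relatively open in $Z$. This yields a short exact sequence for $C^\infty(B\setminus Z)$ whose two terms I would hit from $M$ as follows: the summands $C^\infty(P_{\{\alpha\}}\setminus G)$ with $\alpha\neq\beta$ restrict on each fibre to the ``$M$ for $L$,'' which by the inductive hypothesis maps isomorphically onto the functions on the non-generic locus of that fibre; and the functions pulled back from $Q\setminus G$, i.e. $C^\infty(Q\setminus G)\subseteq M$, account for the base direction, which — $Q$ being maximal — is the rank-one situation already settled. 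Running the same fibrewise devissage for injectivity closes the induction. The main obstacle is the bundle bookkeeping here: checking that $Z$ really decomposes as a closed piece plus a relatively open piece (so the short exact sequence is legitimate), and tracking compact supports in the base direction — the big cell of $Q\setminus G$ is noncompact, so the ``sub'' term must be produced from $M$-functions vanishing on $q^{-1}(Z_Q)$, forcing one to localize on $Q\setminus G$; this is routine but fiddly, while everything outside it is formal. (Alternatively, $\rho|_M$ being an isomorphism is a form of the Solomon--Tits theorem — exactness of the simplicial complex $\bigl(\bigoplus_{|S|=j}C^\infty(P_S\setminus G)\bigr)_j$ augmented to $\mathrm{St}$ — applied along the Bruhat stratification; the induction above is a hands-on version of that.)
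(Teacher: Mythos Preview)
Your reduction of the proposition to the claim that $\rho|_M\colon M\to C^\infty(B\setminus Z)$ is an isomorphism is correct and clean, and the Solomon--Tits remark is apt. But the inductive step you sketch has a real gap. After fibring by $q\colon B\setminus G\to Q\setminus G$, the closed piece of $Z$ is $q^{-1}(Z_Q)$, a bundle over $Z_Q$ with fibre the \emph{full} flag variety $B_L\setminus L$ of the Levi. Restricting $C^\infty(Q\setminus G)\subseteq M$ to this piece yields only fibre-constant functions, a proper subspace of $C^\infty(B\setminus q^{-1}(Z_Q))$ as soon as $|\Pi|>1$; so ``$C^\infty(Q\setminus G)$ accounts for the base direction'' and ``this is the rank-one situation already settled'' is not right. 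To hit the whole quotient you would still need the summands with $\alpha\neq\beta$ over $Z_Q$, but the inductive hypothesis only controls their restriction to $Z_L\subset B_L\setminus L$, not to the entire fibre; and the remaining summand $C^\infty(P_{\{\beta\}}\setminus G)$ does not respect the fibration at all (since $P_{\{\beta\}}\not\subseteq Q$), so it cannot be fed into a fibrewise argument. The injectivity half (``running the same fibrewise devissage'') inherits the same difficulty. The parabolic induction as written does not close.

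The paper's argument is quite different and avoids all bundle bookkeeping. For injectivity it evaluates a putative relation $f=\sum_\alpha f_\alpha$ at the points $w\bar u$ ($w\in W$, $\bar u\in\bar U$): for each fixed $\bar u$ this is an identity in $\mathbb C[W]$ between a multiple of the delta function $\delta_1$ and a sum of left $W_\alpha$-invariant functions; since the latter are orthogonal to the sign character while $\delta_1$ is not, the multiple must be zero, i.e.\ $f(\bar u)=0$. For surjectivity the paper filters $C^\infty(B\setminus G)$ by Bruhat \emph{length}: given a representative $f\in V_r$ of $v\in\mathrm{St}$ with $r\geq 1$, for each $w$ of length $r$ it picks a simple $\alpha$ with $l(s_\alpha w)=r-1$ and subtracts a function $h_w\in C^\infty(P_{\{\alpha\}}\setminus G)$ matching $f$ on $B\setminus X_w=P_{\{\alpha\}}\setminus Y_w$, thereby producing a representative in $V_{r-1}$. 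This cell-by-cell descent is elementary and sidesteps the issue above entirely.
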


\begin{proof}  Let $\mathbb C[W]$ denote the space of functions on $W$. Consider it as a $W$-module for the action by right translations. For every 
simple root $\alpha$, let $W_{\alpha}=\{1,s_{\alpha}\}$. Then $\mathbb C[W_{\alpha}\backslash W]$  is a submodule of $\mathbb C[W]$ consisting of left 
$W_{\alpha}$-invariant functions. For injectivity we need the following lemma. 

\begin{lemma} Let $\delta\in \mathbb C[W]$ be the delta function corresponding to the identity element. Then $\delta$ cannot be written as a linear combination 
of elements in $\mathbb C[W_{\alpha}\backslash W]$ where $\alpha$ runs over all simple roots. 
\end{lemma}
\begin{proof} Functions in $\mathbb C[W_{\alpha}\backslash W]$ are perpendicular to the sign character. Hence any linear combination of such functions is also 
perpendicular to the sign character. But $\delta$ is not, hence lemma. 
\end{proof} 
We can now prove injectivity of $\Omega$. Let $f\in C_c^{\infty}(B\setminus X)$ be in the kernel of $\Omega$. 
Then there exist $f_{\alpha}\in C^{\infty}_c(P_{\alpha} \setminus G)$ such that $f=\sum_{\alpha\in\Pi} f_{\alpha}$.  For every $\bar u \in \bar U$, the function 
$w\mapsto f_{\alpha}(w\bar u)$ is in $\mathbb C[W_{\alpha}\backslash W]$. On the other hand, $w\mapsto f(w\bar u)$ is a multiple of $\delta$.   
Lemma implies that $f(\bar u)=0$. 

For surjectivity, let $V_r\subseteq  C_c^{\infty}(B \setminus G)$ be the subspace of functions supported on the union of the Bruhat cells $X_w$ for $w\in W$ such that $l(w) \leq r$. 
Let $V_w=C_c^{\infty}(B \setminus X_w)$. 
Then, if $r>1$, we have an exact sequence 
\[ 
0 \rightarrow V_{r-1} \rightarrow V_r \rightarrow \bigoplus_{l(w)=r} V_w \rightarrow 0. 
\] 
Let $v\in \mathrm{St}$  be the image of $f\in V_r$. We need to show that $v$ is the image of some $f'\in V_{r-1}$. For every $w$ such that $l(w)=r$, pick $f_w\in V_r$ 
supported on $X_{w'}$ for $l(w) < r$ and $X_w$. Then $f-\sum_{l(w)=r} f_w \in V_{r-1}$. Since $r>1$, for every $w$ such that $l(w)=r$, there exists a simple root $\alpha$ 
such that $l(s_{\alpha} w) =r-1$.  
The group $G$ has a cell decomposition as a union of $Y_w=P_{\alpha} w \bar U$ where $w$ runs over all $w\in W$ such that $l(s_{\alpha} w) =l(w) -1$.  Note that 
$B\backslash X_w=P_{\alpha} \backslash Y_w$ for such $w$. Going back to our fixed $w$ such that $l(w)=r$, there exists a function
 $h_w \in C_c^{\infty}(P_{\alpha} \setminus G)$ such that the support of $h_w$ is on $Y_w$ and larger orbits, and $h_w=f_w$ on $B\backslash X_w=P_{\alpha} \backslash Y_w$. 
 The support of $h_w$, viewed as an element of $C_c^{\infty}(B \setminus G)$, is contained in $X_w$ and the union of $X_{w'}$ such that $l(w') < l(w)$. 
 Hence $f'=f-\sum_{l(w)=r} h_w \in V_{r-1}$ and $f'$ has the image $v$ in $\mathrm{St}$. Hence $\Omega$ is surjective.  
\end{proof}

Let $\mathrm{ch}_{I}$ be the characteristic function of $B(\bar U \cap I)$. 
Since $I=(B\cap I)(\bar U \cap I)$, it is an $I$-fixed element in $C_c^{\infty}(B \setminus G)$. Hence 
$v_0=\Omega(\mathrm{ch}_{I})$ spans the line of $I$-fixed vectors in $\mathrm{St}$. In other words, after switching the roles of $B$ and $\bar B$ we have the following:  

\begin{cor} \label{C:iwahori_vector} 
As a $B$-module, the Steinberg representation is isomorphic to $\ind_T^B(1) \cong C_c^{\infty}(U)$. This isomorphism maps  a non-zero $I$-fixed vector to
the characteristic function of $I\cap U$. 
\end{cor}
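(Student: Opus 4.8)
The plan is to read the corollary off from Proposition~\ref{prop steinberg} after interchanging the roles of $B$ and $\bar B$. The presentation $\mathrm{St}=C_c^{\infty}(B\backslash G)/\sum_{\emptyset\neq J\subseteq\Pi}C_c^{\infty}(P_J\backslash G)$ used in the proposition is unchanged, up to symmetry, if one replaces $B$ by $\bar B$ (and each $P_J$ by the opposite parabolic), and the combinatorial input of the proof --- the lemma that the delta function at $1\in\mathbb C[W]$ is not a linear combination of elements of the subspaces $\mathbb C[W_\alpha\backslash W]$ --- does not involve $B$ at all. Hence the verbatim argument shows that the composite
\[
C_c^{\infty}(\bar B\backslash \bar X)\longrightarrow C_c^{\infty}(\bar B\backslash G)\longrightarrow \mathrm{St},
\]
with $\bar X=\bar B U$ the open Bruhat cell for the pair $(\bar B,B)$, is a $B$-equivariant isomorphism.

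Next I would identify $C_c^{\infty}(\bar B\backslash\bar X)$ with $\ind_T^B(1)\cong C_c^{\infty}(U)$. Since $\bar X=\bar B U$ with $\bar B\cap U=\{1\}$, the orbit map $u\mapsto \bar B u$ is a homeomorphism of $U$ onto $\bar B\backslash\bar X$ (the cell being a direct product of $p$-adic manifolds). As $T\subset\bar B$, right translation by $tu'\in B=TU$ sends $\bar B u$ to $\bar B(t^{-1}ut)u'$, so this homeomorphism intertwines the natural $B$-action on $\bar B\backslash\bar X$ with the $B$-action on $U$ in which $T$ acts by conjugation and $U$ by right translation; the latter is exactly the model of $\ind_T^B(1)$ obtained by restricting functions from $B$ to $U$. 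Composing, we obtain $B$-module isomorphisms $\mathrm{St}\cong C_c^{\infty}(U)\cong\ind_T^B(1)$.

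It remains to locate the Iwahori vector. Because $I=(\bar B\cap I)(U\cap I)$, the characteristic function of $\bar B(U\cap I)\subseteq\bar X$ is a nonzero $I$-fixed vector of $C_c^{\infty}(\bar B\backslash\bar X)$; as in the discussion preceding the statement, its image in $\mathrm{St}$ is nonzero by injectivity of the map and $I$-fixed, hence spans the one-dimensional space $\mathrm{St}^I$. Under $C_c^{\infty}(\bar B\backslash\bar X)\xrightarrow{\ \sim\ }C_c^{\infty}(U)$ this function corresponds to the characteristic function of $U\cap I$, which gives the last claim. The argument is essentially bookkeeping; the only point worth a second thought is the legitimacy of the $B\leftrightarrow\bar B$ interchange, and this is immediate from the symmetry of the chosen realization of $\mathrm{St}$ together with the fact that the combinatorial lemma does not involve $B$.
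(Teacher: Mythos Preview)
Your proposal is correct and follows exactly the paper's approach: the paper derives the corollary in a single sentence by ``switching the roles of $B$ and $\bar B$'' in Proposition~\ref{prop steinberg} together with the observation that $\mathrm{ch}_I$ (the characteristic function of $B(\bar U\cap I)$) spans $\mathrm{St}^{I}$, and you have simply written out the details of that swap and the identification $C_c^{\infty}(\bar B\backslash\bar X)\cong C_c^{\infty}(U)\cong\ind_T^B(1)$.
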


 \section{Restricting Steinberg}

  Let $G'$ be a split orthogonal group of the type $D_{n+1}$, and $T'$ a maximal split torus of  $G'$.  
 Let $\Sigma'$ be the corresponding root system. 
 The standard realization of corresponding root system is in   $E'=\mathbb R e_1 + \ldots + \mathbb R e_{n+1}$, so that 
$\Sigma'=\{\pm e_i\pm e_j\}$. 
Let  $\Delta'$ be a set of simple roots consisting of
roots $\alpha_1=e_1-e_2, \ldots ,\alpha_{n}=e_{n}-e_{n+1} $ and $\alpha_{n+1}=e_n+e_{n+1}$. Let $s_1, \ldots s_{n+1}$ be the corresponding simple reflections. 
 The choice of simple roots gives us a standard Borel subgroup $B'=T'U'$ and 
its opposite $\bar B'=T' \bar U'$.   Let $W'$ denote the Weyl group. 
We have an involution $\sigma$ of $\Delta'$ that permutes $\alpha_n$ and $\alpha_{n+1}$. This involution lifts to an involution 
of $G'$ such that the group of fixed points is $G$, the split odd orthogonal group. Let $T\subset T'$ and $U\subset U'$ be the subgroups of $\sigma$-fixed elements. 
Since $T$ contains a strongly $G'$-regular element, the centralizer of $T$ in $G'$ is $T'$. This implies that the normalizer $N_G(T)$ is contained in $N_{G'}(T')$  and 
we have an identity $N_G(T)= (N_{G'}(T'))^{\sigma}$. The quotients $N_G(T)/T(O)$ and $N_{G'}(T')/T'(O)$ are isomorphic to extended affine Weyl groups 
$W_e$ and $W'_e$ and our next task is to describe the identity $N_G(T)= (N_{G'}(T'))^{\sigma}$ on the level of extended Weyl groups. 

\smallskip 

Consider the affine root $\alpha_0= 1-e_1-e_2$. Then $\Delta'_a=\Delta'\cup \{\alpha_0\}$ is a set of simple affine roots, and let 
 $C' \subset E'$ be the corresponding chamber. The extended  affine Weyl group $W'_e$ 
is a semi-direct product of $W'$ and the group $\mathbb Z^{n+1}$ of translations of $E'$. Then 
$W'_e= W'_a \cup s_0 W'_a$ where  $W_a'$ is the affine Weyl group and $s_0$ is a unique non-trivial element in $W'_e$ that stabilizes $C'$. 
More precisely, $s_0$ is a central symmetry about the point $(1/2, 0)$ in the two dimensional subspace spanned by $e_1$ and $e_{n+1}$.  
The extended affine Weyl group is generated by $s_0, s_1,  \ldots , s_{n+1}$. The involution $\sigma$ descends to $W_e'$ and 
$(W_e')^{\sigma}$ consists of elements commuting with the linear map on $E'$ defined by $e_{n+1} \mapsto -e_{n+1}$ and identity on $E\subset E'$.  
Note that $s_0, \ldots, s_{n-1} \in (W_e')^{\sigma}$, and we have a natural isomorphism $(W_e')^{\sigma} \cong W_e$ is obtained by restricting the action of 
$w\in (W_e')^{\sigma}$ to $E$. Note that under this isomorphism $s_0, \ldots, s_{n-1}\in W_e'$ map to the elements of $W_e$ denoted by the same symbols. 

\smallskip

 Let $I'\subset G'$ be the Iwahori subgroup corresponding to $C'$ and $H'$ the corresponding 
Iwahori Hecke algebra. This algebra is generated by $t'_i$, the characteristic functions of $I's_i I$, for $i=0, \ldots, n+1$. The Steinberg representation of $G'$ is the 
unique irreducible representation $\mathrm{St}$ such that $\mathrm{St}^{I'}=\mathbb C v_0$,   $t_i \ast v_0 = -v_0$ for $i=1, \ldots , n+1$, and $t_0 \ast v_0= v_0$. 
Let $C=E\cap C'$, and $I$ the corresponding Iwahori subgroup, and $H$ the Hecke algebra, generated by $t_i$, $i=0, \ldots, n$.  Let $H_0\subset H$ be the 
finite algebra generated by $t_i$, $i=0, \ldots, n$, and $\epsilon^+$ the character of $H_0$ such that $t_0\mapsto 1$ and $t_i\mapsto -1$, for $i=1, \ldots, n-1$. 

\begin{lemma} \label{L:character} 
 Let $\mathrm{St}$ be the Steinberg representation of $G'$ and $v_0\in \mathrm{St}^{I'}$. Then $H_0$ acts on $v_0$ by the character $\epsilon^+$. 
\end{lemma}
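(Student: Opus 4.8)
The plan is to reduce Lemma~\ref{L:character} to the already-recorded action of the Iwahori--Hecke algebra $H'$ of $G'$ on $v_0$, namely $t'_0\ast v_0 = v_0$ and $t'_i\ast v_0 = -v_0$ for $i = 1,\ldots,n+1$, which is part of the characterization of $\mathrm{St}$. Since the simple reflections $s_0, s_1,\ldots,s_{n-1}$ of $W_e$ are precisely the images under $(W'_e)^\sigma\cong W_e$ of the same-named simple reflections of $W'_e$, it is natural to expect that the generators $t_0,\ldots,t_{n-1}$ of $H_0\subset H$ act on $v_0$ in the same way as $t'_0,\ldots,t'_{n-1}$ do; and the desired character $\epsilon^+$ is exactly $\epsilon^+(t_0)=1$, $\epsilon^+(t_i)=-1$ for $1\le i\le n-1$, i.e.\ the restriction of the $H'$-character through which $v_0$ transforms.

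Concretely I would proceed in two steps. First, $v_0$ is $I$-fixed: this follows from the inclusion of chambers $C = C'\cap E\subseteq C'$ together with the folding $G = (G')^\sigma$, which gives $I\subseteq I'$ (in fact $I = I'\cap G$). For the containment one notes that each affine root group of $G$ generating $I$ is either an affine root group of $G'$ (for the roots $\pm e_i\pm e_j$ with $i,j\le n$) or is generated by the two affine root groups of $G'$ attached to $\pm e_i\pm e_{n+1}$ (for the short roots $\pm e_i$ of $G$), and in each case the relevant affine $G'$-root is positive on $C'$, because its sign is constant on $C'$ and already positive on $C\subseteq C'$. Second, for each $i\in\{0,1,\ldots,n-1\}$ I would check that the double coset $I s_i I$ of $G$ lies in $I' s_i I'$ and that the inclusion induces a bijection $I s_i I/I \xrightarrow{\ \sim\ } I' s_i I'/I'$: for $i\ge 1$ one exhibits the $q$ representatives of $I s_i I/I$ using the common root groups $U^{G}_{\pm(e_i-e_{i+1})} = U^{G'}_{\pm(e_i-e_{i+1})}$ and notes that $[I' s_i I':I'] = q$ as well, while for $i=0$ both $s_0$-double cosets are single cosets since $t_0^2 = 1 = (t'_0)^2$. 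Granting this, and using that $v_0$ is $I'$-fixed so that $x\cdot v_0$ depends only on the $I'$-coset of $x$,
\[
t_i\ast v_0 \;=\; \sum_{x\in I s_i I/I} x\cdot v_0 \;=\; \sum_{y\in I' s_i I'/I'} y\cdot v_0 \;=\; t'_i\ast v_0,
\]
which is $v_0$ for $i=0$ and $-v_0$ for $1\le i\le n-1$; this is exactly the character $\epsilon^+$ of $H_0$, proving the lemma.

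The main obstacle is the second step: making precise, at the level of affine root groups and their pinnings, the compatibility of $I\subseteq I'$ and of the double cosets $I s_i I\subseteq I' s_i I'$ for $i = 0,\ldots,n-1$ under the folding $G\hookrightarrow G'$. The identification $(W'_e)^\sigma\cong W_e$ recorded above gives this on the level of (extended) Weyl groups, and it has to be promoted to a statement about the Iwahori subgroups and the corresponding double cosets. Once that is in place, everything else is formal.
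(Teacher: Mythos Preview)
Your proposal is correct and follows essentially the same route as the paper: reduce to showing $t_i\ast v_0 = t'_i\ast v_0$ for $i=0,\ldots,n-1$ by matching coset representatives of $Is_iI/I$ and $I's_iI'/I'$, treating $i=0$ via the fact that $s_0$ normalizes both $I$ and $I'$, and $i\ge 1$ via the common root group for $\alpha_i=e_i-e_{i+1}$. The paper carries this out more tersely by simply writing down the explicit representatives $x_{\alpha_i}(t)s_i$, $t\in O/\varpi O$, for both decompositions.
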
 
\begin{proof} For $i=0, \ldots ,n-1$ the Hecke algebra elements $t_i$ and $t'_i$ are the characteristic functions of $Is_i I$ and $I's_iI'$, respectively, where the two 
$s_i$ coincide. The statement of the lemma is that $ t_i \ast v_0= t_i'\ast v_0$ for these $i$. 
If $g\ast v$ denotes the action of $g\in G'$ on $v\in\mathrm{St}$ then  $t_i\ast v_0 = \sum_j g_j\ast v_0 $ after writing $Is_i = \cup_j g_j I$ (a disjoint sum).  Thus it suffices to that 
$I's_i I'= \cup_j g_j I$, with the same $g_j$. This is obvous for $i=0$ since $s_0$ normalizes $I$ and $I'$. For $i=1, \ldots , n-1$ the root $\alpha_i=e_i-e_{i+1}$ is a simple for 
both groups. We have 
\[ 
Is_i I = \cup_{t\in O/\varpi O} x_{\alpha_i}(t) s_i I 
\] 
and a similar decomposition for $I's_i I'$.
\end{proof}

 \begin{theorem} \label{thm global st}
  Let $\mathrm{St}$ be the Steinberg representation of  $G'$, the split orthogonal group of type $D_{n+1}$. 
  Let  $v_0$ be a  non-zero $I'$-fixed vector in $\mathrm{St}$. Then the 
 $H$-module $\mathrm{St}^{I}$ is generated by $v_0$ and isomorphic to $H\otimes_{H_0} \mathrm{\epsilon^+}$. In particular, it is projective. 
\end{theorem}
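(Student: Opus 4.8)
The plan is to write down the evident candidate map by Frobenius reciprocity, reduce the isomorphism assertion to a statement about an $A$-module, and then settle that by a Jacquet-module computation based on Corollary~\ref{C:iwahori_vector}. By Lemma~\ref{L:character} the finite subalgebra $H_0$ acts on $v_0$ through the character $\epsilon^+$, so Frobenius reciprocity for $H_0\hookrightarrow H$ produces a morphism of $H$-modules $\phi\colon H\otimes_{H_0}\epsilon^+\to \St^I$ with $\phi(1\otimes 1)=v_0$. By the Bernstein decomposition $H\cong A\otimes H_0$ the source is a free $A$-module of rank one generated by $1\otimes 1$, where $A\cong\bbC[T/T(O)]$. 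On the other hand, exactly as in the rank one case, $\St^I\cong(\St|_G)_{\bar U}^{T(O)}$ as $A$-modules, with $x\in T/T(O)$ acting on the right-hand side through the $T$-action on the Jacquet module. Hence $\phi$ is an isomorphism once we show that $(\St|_G)_{\bar U}^{T(O)}$ is free of rank one over $A$ and that the image of $v_0$ is a generator: $\phi$ then identifies with an $A$-linear map $A\to A$ sending $1$ to a unit. Projectivity of $\St^I$ follows as well, since $H$ is free over $H_0$ (Bernstein), so $H\otimes_{H_0}(-)$ preserves projectives, while $\epsilon^+$ is a projective $H_0$-module because $H_0$ is semisimple (its parameters are not roots of unity); this was already recorded when the four induced modules were introduced.

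For the computation, apply Corollary~\ref{C:iwahori_vector} to $G'$: as a $\bar B'$-module, $\St\cong C_c^{\infty}(\bar U')$, with $\bar U'$ acting by right translation, $T'$ by conjugation, and $v_0$ corresponding to the characteristic function of $\bar U'\cap I'$. Restricting the action to $\bar B=T\bar U\subset\bar B'$ and passing to $\bar U$-coinvariants gives $(\St|_G)_{\bar U}\cong C_c^{\infty}(\bar U'/\bar U)$ with $T$ acting by conjugation. The quotient is read off from the folding $\sigma\colon e_{n+1}\mapsto -e_{n+1}$: the root subgroups of $\bar U'$ attached to $-(e_i\pm e_j)$, $1\le i<j\le n$, already lie in $\bar U$, while the remaining ones occur in pairs $\{e_{n+1}-e_i,\,-e_{n+1}-e_i\}$, $1\le i\le n$, where $U'_{e_{n+1}-e_i}$ and $U'_{-e_{n+1}-e_i}$ commute and span a two-dimensional abelian unipotent group inside which $\bar U$ contributes only the one-dimensional $\sigma$-fixed subgroup $U_{-e_i}$. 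Hence $\bar U'/\bar U\cong\prod_{i=1}^n k_0$ as a variety, and $T\cong(k_0^{\times})^n$ acts on the $i$-th factor through $-e_i$, i.e.\ by scaling by $t_i^{-1}$. Taking $T(O)$-invariants, $(\St|_G)_{\bar U}^{T(O)}\cong\bigotimes_{i=1}^n C_c^{\infty}(k_0)^{O^{\times}}$, each factor a module over the copy of $\bbC[\varpi^{\pm1}]$ in which $\varpi$ acts by $f\mapsto f(\varpi\cdot)$.

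Now $C_c^{\infty}(k_0)^{O^{\times}}$ is free of rank one over $\bbC[\varpi^{\pm1}]$: although it properly contains the span of the sphere functions $\mathrm{ch}_{\{v=m\}}$, it is cyclic, generated by $\mathrm{ch}_{\varpi^aO}$ for any $a\in\bbZ$, and such a generator has zero annihilator. Consequently $(\St|_G)_{\bar U}^{T(O)}$ is free of rank one over $A$, the generators being the characteristic functions of boxes $\prod_i\varpi^{a_i}O\subset\prod_i k_0$ and their unit multiples. It remains to locate $v_0$: its image under $\bar U'\to\bar U'/\bar U$, computed by integrating over $\bar U$-cosets, equals $\mathrm{vol}(\bar U\cap I')$ times the characteristic function of the image of $\bar U'\cap I'$; and since $\bar U'\cap I'$ is a product of root-group pieces $x_{-\beta}(\varpi O)$, that image is the box $\prod_i\varpi O$. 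So the image of $v_0$ is a unit multiple of a generator, $\phi$ is an isomorphism of $H$-modules, and $\St^I$ is projective.

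The only ingredient that is not formal is this Jacquet-module computation together with the tracking of $v_0$ through it: describing $\bar U'/\bar U$ and its residual $T$-action via the folding $D_{n+1}\to B_n$, observing that $C_c^{\infty}(k_0)^{O^{\times}}$ is $A$-free of rank one with generator the characteristic function of a ball (not of a sphere), and checking that $\mathrm{ch}_{\bar U'\cap I'}$ pushes forward to a box function. The remaining steps — existence of $\phi$, the reduction via the Bernstein decomposition, and projectivity — are routine.
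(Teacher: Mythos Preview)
Your proof is correct and follows essentially the same route as the paper: produce the map from $H\otimes_{H_0}\epsilon^+$ via Lemma~\ref{L:character} and Frobenius reciprocity, reduce to an $A$-module statement using the Bernstein decomposition and the Jacquet isomorphism $\St^I\cong\St_{\bar U}^{T(O)}$, then compute the latter from Corollary~\ref{C:iwahori_vector} by identifying $\bar U'/\bar U$ with $k_0^n$ and tracking $v_0$ to a generating box function. Your write-up is more detailed in places (the explicit folding description of $\bar U'/\bar U$, the observation that ball functions rather than sphere functions give the free $A$-basis of $C_c^\infty(k_0)^{O^\times}$, and the explicit projectivity argument), but the architecture is identical to the paper's.
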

\begin{proof} By Lemma \ref{L:character} and the Frobenius reciprocity, we have a natural map  $H\otimes_{H_0} \mathrm{\epsilon^+} \rightarrow \mathrm{St}^{I}$.  
We need to show that $\mathrm{St}^{I} \cong A$, and generated by $v_0$. To that end, we use that 
\[ 
\mathrm{St}^{I} \cong (\mathrm{St})_U^{T(O)}. 
\] 
We can compute the right side using Corollary \ref{C:iwahori_vector}, which says that $\mathrm{St} \cong C_c^{\infty}(U')$. Hence $\mathrm{St}_U \cong C_c^{\infty}(U'/U)$. 
Now note that $U'/U\cong k_0^n$ as $T=(k_0^{\times})^n$-module. Under the isomorphism $\mathrm{St}_U \cong C_c^{\infty}(k_0^n)$ the vector $v_0$ is mapped to 
the characteristic function of $O^n$.  Now observe that $T/T(O)$-translates of the characteristic function of $O^n$ form a basis of $C_c^{\infty}(k_0^n)^{T(O)}$ on which 
$T/T(O)$ acts freely. In other words,  $C_c^{\infty}(k_0^n)^{T(O)}$ is isomorphic to $A$, as an $A$ module. 
\end{proof}

\section{Acknowledgment} The authors would like to thank Sol Friedberg for sharing his ideas and suggestions. In particular, the second author 
would like to thank Sol Friedberg for an invitation to give a talk at Boston College that was crucial to existence of this article.

 \end{document}